\font\bBB=msbm10
\def\bBR{\mbox{\bBB R}}
\def\R3{\bBR ^3}
\begin{document}
\title {From Foucault's Pendulum to the Gauss--Bonnet Theorem}
\author{
Orlin Stoytchev\thanks{American University in Bulgaria, 2700 Blagoevgrad, Bulgaria} }
\date{}
\maketitle
\abstract
{We present a self-contained proof of the Gauss-Bonnet theorem for two-dimensional surfaces embedded in $\R3$ using just classical vector calculus. The exposition should be accessible to advanced undergraduate and non-expert graduate students. It may be viewed as an illustration and exercise in multivariate calculus and a motivation to go deeper into the fields of geometry and topology.}
\section{Introduction}
\par\smallskip
The Gauss--Bonnet theorem states that the total curvature of a closed two-dimensional oriented surface (i.e., the integral of the Gaussian curvature over that surface) is equal to the Euler characteristic of the surface multiplied by $2\pi$. This is a beautiful result relating a geometric quantity -- the curvature -- to a purely topological one -- the Euler characteristic. Given the right intuition about geodesics and parallel transport, one can prove the Gauss-Bonnet theorem for embedded surfaces with little more than vector calculus and definitely without heavy differential-geometric machinery.\par
When L\'eon Foucault built his famous pendulum in 1851 in Paris (first in the Paris Observatory, moved a little later to the Panth\'eon), he hardly had in mind any deep connections with geometry and topology. His aim was, of course, to demonstrate by a direct physical experiment the rotation of the earth about its axis. When the earth makes one full rotation relative to the stars (which happens in approximately 23 hours and 56 minutes and is called sidereal day), the plane in which the pendulum in Paris is swinging, rotates relative to the ground by $271.1^{\circ}$ clockwise. A pendulum at the north pole would rotate by exactly $360^{\circ}$ while at the equator there will be no rotation. In general the angle of rotation is given by $360^{\circ} \sin\phi$, where $\phi$ is the geographic latitude. The explanation behind this formula is that the earth is curved and the normal vector to the surface at some point traces a cone as this point traces a circle on the sphere. At the same time the tangent vector giving the direction of swinging of the pendulum undergoes a {\it parallel transport} (to be defined later) along the circle as there are no forces to cause any rotation around the normal vector. In a sense Foucault's pendulum tells us not only that the earth rotates, but that it is curved (in case we knew Gauss-Bonnet's theorem but weren't sure of the earth's shape).\par
The same phenomenon can be viewed slightly differently. Let $C$ denote the circle coinciding with one of the geographic parallels on the sphere, at latitude $\phi$, with counterclockwise orientation. When one performs a parallel transport along $C$ of a tangent vector, the vector in general will be rotating relative to $C$, since $C$ is not a geodesic (a large circle), unless it is the equator. The vector will fail to return to its original orientation when coming to the initial point. The angle between the initial and the final vectors  is sometimes  called {\it deficit angle} (or {\it angular deficit} or {\it angular defect}). For the case at hand the calculation gives $2\pi(1-\sin\phi)$. If one calculates the area on the sphere, bounded by $C$, one finds out that it is given by  $2\pi(1-\sin\phi) R^2$, where $R$ is the radius of the sphere. This is a very special case of the Gauss-Bonnet formula and the behavior of Foucault's pendulum demonstrates its validity.\par
In the next section we will define in a simple and rather intuitive way the notions of a geodesic and parallel transport. We will derive the formula for the deficit angle along the curve $C$ as above. Then, using appropriate technique for calculation, we will obtain the Gauss-Bonnet formula for an arbitrary closed simple curve on the sphere. Not surprisingly this derivation invokes Stokes' theorem. The general case -- arbitrary closed oriented two-dimensional surface embedded in  $\bBR^3$ is treated in the last section. The key is to consider carefully the Gauss map given by the unit normal vector to the surface, which sends each point on the surface to a point on the unit sphere. It turns out that the general case is reduced to the case of the sphere by a simple change of variables formula. The Gauss curvature on the surface appears in this setting as the Jacobian of the Gauss map. \par
\section{Geodesics, parallel transport, flat and curved surfaces}\par\smallskip
Consider a smooth two-dimensional surface $\sigma$ embedded in $\R3$. Intuitively a geodesic curve on $\sigma$ is a smooth curve $C\subset\sigma$, such that if you travel along it with constant speed, at any given point there will be no component of the acceleration in the tangent plane to the surface. If we imagine rolling a ball on the surface (e.g., some adhesive force causes the ball to stick to the surface but does not restrict it in any other way) and we do this in weightlessness, the ball will trace precisely a geodesic. If there was no restriction, the geodesic would be a straight line in $\R3$. The condition to stay on the surface makes the trajectory  curved  in general, but in such a way that the acceleration stays normal to the surface, since the only force is the normal adhesive force. Suppose $C$ is a smooth curve on $\sigma$ and let ${\bf c}(t)$ be a smooth parametrization of $C$, such that the corresponding velocity vector ${\bf v}(t):={d{\bf c}\over dt}$ has constant norm. (In fact if we parametrize $C$ by arc-length, the norm of ${\bf v}(t)$ will be one.) Let ${\bf a}(t):={d{\bf v}\over dt}$ be the acceleration and denote by ${\bf a}_T(t)$ its projection in the tangent plane (at the point ${\bf c}(t)$). So by definition $C$ is called a {\it geodesic} if  ${\bf a}_T(t)=0,\,\,\forall t$. If we consider as an example a circle on a sphere and imagine a point moving with constant speed along this circle, the acceleration is obviously a vector pointing towards the center of the circle. Unless  the circle is a large circle (like the equator) its center does not coincide with the center of the sphere and the acceleration has nonzero tangential component. In fact the only geodesics on the sphere are (parts of) large circles.\par
Defining parallel transport along a geodesic is simple. If you have a family of tangent  vectors ${\mathbf w}(t)$ defined for each point ${\bf c}(t)$ of the geodesic  $C$ and  $\|{\mathbf w}(t)\| = \mbox{const.}$, then we say that this family of vectors has been obtained by parallel transport along $C$ of one vector, say ${\mathbf w}(0)$ if the angle between ${\mathbf w}(t)$ and the geodesic stays constant. More precisely we should have constant angle between ${\mathbf w}(t)$ and ${\mathbf v}(t)$. Obviously the velocity vectors  ${\mathbf v}(t)$ along a geodesic $C$ constitute the simplest example of parallel transport. When the curve $C$ is not a geodesic we may use the intuition coming from Foucault's pendulum. Let ${\bf w}(0)$ be a unit tangent vector at the initial point ${\bf c}(0)$ giving the direction of swinging of the pendulum. When we move the latter along $C$, there will be no rotation of the pendulum around the normal vector. In this way we obtain a unit vector ${\bf w}(t)$ for $t$ and we should say that this has been obtained from ${\bf w}(0)$ by parallel transport along $C$. So the condition is similar to the one we imposed for the velocity ${\bf v}(t)$ when we were defining geodesics, i.e. $({d{\bf w}\over dt})_{_T}=0$, or in words, the rate of change of ${\bf w}(t)$ has no tangential component. Now, because $C$ is no longer a geodesic, the angle between ${\bf w}(t)$ and $C$ will be changing. The rate of change of this angle, denoted further by $\omega(t)$, has magnitude equal to the rate of rotation of  ${\bf v}(t)$ in the tangent plane and opposite sign. Therefore we have:
$$|\omega(t)|={\|{\bf a}_T(t)\| \over \|{\bf v}(t)\|}\quad .$$
Let's take the closed curve $C$ to be the geographic parallel at geographic latitude $\phi$ (with counterclockwise direction) and   calculate the total angle of rotation (relative to $C$) of a vector ${\bf w}$ transported parallel to itself. We can parametrize $C$ with the path ${\bf c}(t)=\cos 2\pi t \cos\phi\  {\bf i}+\sin 2\pi t \cos\phi\  {\bf j}+\sin \phi\  {\bf k}$, $t\in [0, 1]$. Straightforward calculation gives ${\bf a}(t)=-4\pi^2\cos\phi(\cos 2\pi t\  {\bf i}+\sin 2\pi t\  {\bf j})$ and ${\bf a}_T(t)=-4\pi^2\cos\phi (\cos 2\pi t\ \sin^2\phi\ {\bf i}+\sin 2\pi t\ \sin^2\phi\ {\bf j}-\sin\phi\cos\phi\  {\bf k})$. Finally we obtain $|\omega(t)|=2\pi|\sin\phi|$ and therefore the total angle of rotation of ${\bf w}$ when coming to the initial point, which is obtained by integrating $\omega(t)$ between $0$ and $1$ (and figuring the correct sign) is equal to
$-2\pi\sin\phi\ .$
This implies that the angle between ${\bf w}(0)$ and ${\bf w}(1)$, i.e. the  deficit angle, is given by 
\begin{equation}\label{sphere}
\Omega=2\pi(1-\sin\phi)\ .
\end{equation}
\par
The same result can be obtained (see, e.g. \cite{Arnold}) using a simple geometric approach by considering the circular cone tangential to the sphere along the curve $C$ (Fig. \ref{conical}). (When $C$ is the equator the cone degenerates into a cylinder, i.e. a cone with vertex (apex) at infinity.) Since the notions of geodesics and parallel transport along $C$ depend only on the tangent planes along $C$ and these are common for both surfaces, a curve will be a geodesic on the sphere if and only if it is a geodesic on the cone and parallel transport gives the same result for both surfaces. This is a general property for any two surfaces touching each other along a curve. Suppose now that we cut the cone along some line, from the boundary to the apex and lay it flat on the plane. This process is called "developing" the surface and surfaces allowing this are called {\it developable}. We will give a definition of the latter term but intuitively these are surfaces which can be obtained by gluing patches cut from sheets of paper.
\begin{figure}[h]
\centering
\includegraphics[width=60mm]{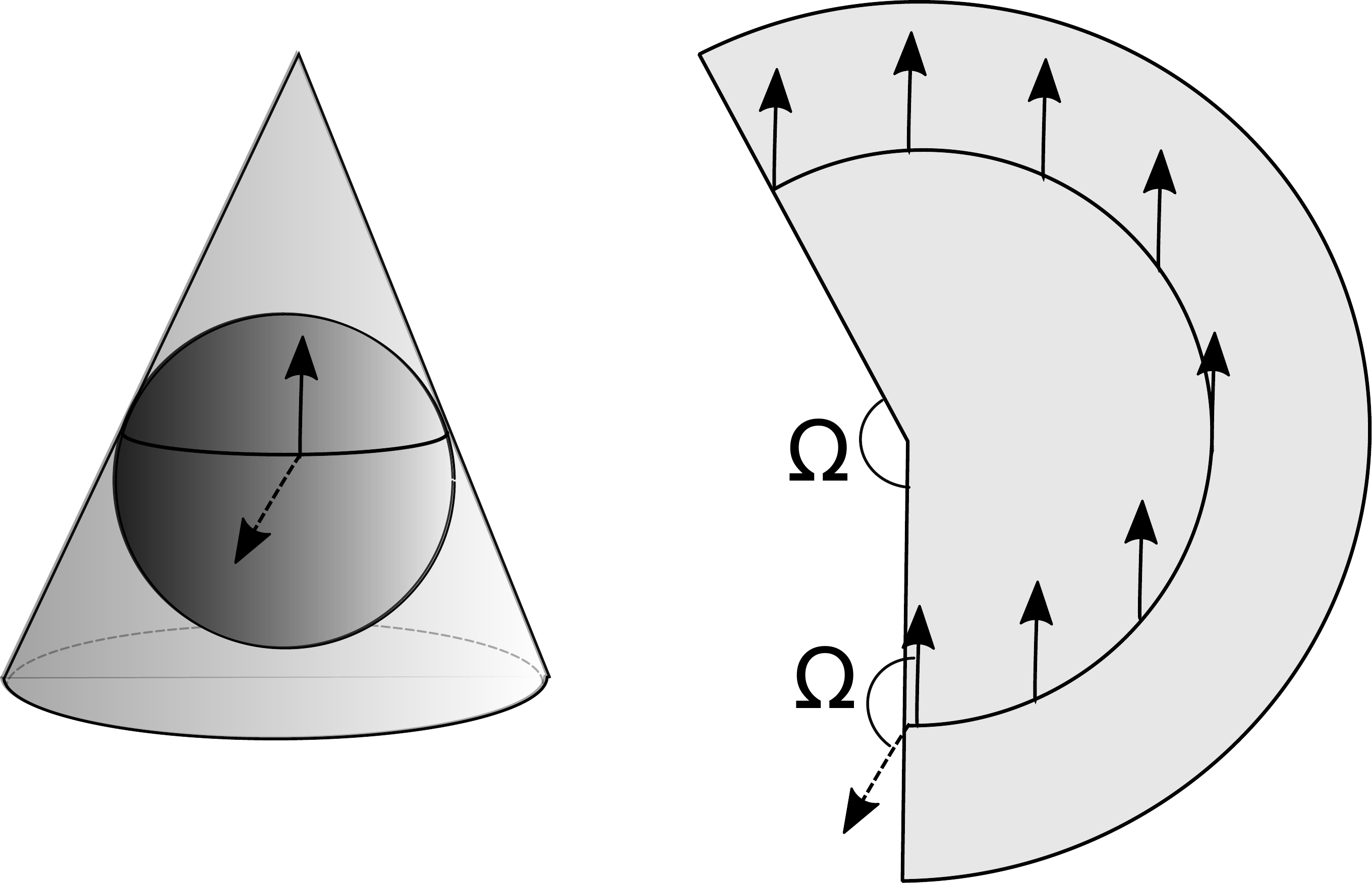}
\caption{Parallel transport along a circle on the sphere, using a tangential cone\label{conical}}
\end{figure}
It is clear that the process of developing a surface does not influence parallel transport - a spacial curve $C$ transforms into a plane curve $C'$, the normal component of ${\bf a}(t)$ along $C$ turns into zero under this transformation while the tangential component of ${\bf a}(t)$ remains unchanged. The developed cone becomes a disc with a cut-out sector with angle $\Omega$, hence the name deficit angle. Parallel transport of a vector ${\bf w}$ along the curve $C'$ is the usual parallel transport in the plane. The angle between ${\bf w}(0)$ and  ${\bf w}(1)$ is $\Omega$. A relatively simple exercise in elementary geometry allows us to calculate this angle and obtain the result stated in Equation (1).\par
The idea of the deficit angle has far-reaching ramifications. A developable smooth surface like the cone with its apex removed is very similar to the plane in that it can be laid flat on the plane and if you transport a vector parallel to itself along a contractible curve $C$ on this surface it comes back to itself. We call such surfaces Gaussian-flat, or intrinsically flat. By contrast, no piece of the sphere can be laid flat on the plane and if you perform a parallel transport of a vector along any (non-constant) simple closed curve on the sphere, the returning vector will generally have different direction from the initial. We say that such surfaces have nonzero Gaussian curvature. The precise definition of Gaussian curvature will be given later but it is some local quantity $K$ defined at each point of a smooth surface and measuring the deviation of the surface from being developable. For the sphere it is clear that $K$ should be the same at each point since the neighborhood of every point looks exactly the same as the neighborhood of every other. The essential part of the proof of the Gauss-Bonnet theorem is to show that if $C$ is a simple, positively oriented and contractible curve on a surface, then the deficit angle, i.e. the angle between the initial vector ${\bf w}(0)$ and the final vector ${\bf w}(1)$ is given by the integral of $K$ over the part of the surface surrounded by $C$. In other words we have
\begin{equation}\label{Eq2}
\Omega=\int\hskip-6pt\int_{\sigma} K\,dA\ ,
\end{equation}
$\sigma$ being the part of the surface surrounded by $C$. This formula, by the way, immediately shows that for a sphere with radius $R$ we must have $K\propto {1\over R^2}$. Coming back to our argument when calculating the deficit angle along the geographic parallel $C$ we may view replacing the upper part of the sphere by the cone touching it along $C$ as the limit of a process where we replace the upper part of the sphere by a conical surface with its apex cut and capped by a smaller and smaller spherical part. Since the conical part is Gaussian-flat the curvature is concentrated at the cap and in the limit it becomes infinite but in such a way that the integral remains equal to $\Omega$. In other words we may view our cone as a curved surface if we say that the curvature is zero everywhere except at the apex where it has a $\delta$-function-like singularity.\par
Take an arbitrary closed surface $S$ and choose a triangulation of $S$, i.e. cover it by curvilinear triangles. Let $V$, $E$ and $F$ denote the number of vertices, edges and triangles, respectively. Then the Euler characteristic $\chi (S):= V-E+T$ is independent of the choice of triangulation and is a topological invariant. Now replace the curved edges by straight ones and the curvilinear triangles by flat triangles. We get a polyhedron $S'$ having the same Euler characteristic. If we calculate the deficit angle at one of the vertices we have
$$\Omega_i=2\pi - \sum_{k=1}^{n_i}\gamma_{ik}\ ,$$
where $n_i$ is the number of triangles meeting at the vertex $v_i$. Note that the sum of the angles at some vertex may exceed $2\pi$ and the deficit angle will be negative in this case. The corresponding surface near this vertex will look like an (uncomfortable edgy) saddle, possibly with multiple "ridges" and "troughs". Summing over all vertices we get the total deficit angle:
$$\Omega=\sum_{i=1}^V \Omega_i=2\pi V-\sum_{i=1}^V \sum_{k=1}^{n_i}\gamma_{ik}\ .$$
The double sum on the right is in fact the sum of all angles of all triangles of our polyhedron and therefore we can write
\begin{equation}\label{Eq3}
\Omega=2\pi(V-{F\over 2})=2\pi (V-E+F)=2\pi\chi(S')\ .
\end{equation}
(We use the fact that each triangle contains three edges and each edge is common to two triangles and thus $E=3F/2$.) The result in Equation (3) is the content of Euler's theorem, namely that the total deficit angle for an arbitrary polyhedron is $2\pi$ times the Euler characteristic.\par\noindent
{\bf Note:} We proved Equation (3) for triangles while the faces of a polyhedron are usually arbitrary polygons and the number $F$ in the definition of the Euler characteristic is the number of faces. This, however does not cause any difficulty as each polygon can be broken down into triangles by adding some edges. As the reader may easily check this process does not change the Euler characteristic.\par
Thus the theorem of Euler about polygons may be viewed as a discrete analog of Gauss-Bonnet's theorem if we think of the total deficit angle as the integral of the curvature over the surface of the polyhedron and the curvature is "concentrated" at the vertices.\par
Let now $C$ be an arbitrary simple (i.e., having no self-intersections) smooth closed curve on the unit sphere. We aim to demonstrate that the formula in Equation (2) is still valid in this case. First we perform the proof for a {\it small} curve which does not leave some open half-sphere. We choose a point surrounded by $C$ and call it the``north pole'' of the sphere. We approximate $C$ by a piecewise smooth curve $C'$ consisting of pieces of meridians (which are geodesics) and pieces of parallels (which are not geodesics). Since during parallel transport along a geodesic a vector $\mathbf w$ preserves its angle relative to it, it is clear that we must sum the contributions to the deficit angle $\Omega'$ from the motion along the pieces of parallels. From our previous calculation we have that the contribution along a piece of a parallel at a geographic latitude $\phi$, corresponding to azimuthal change  $\Delta\theta$ is equal to $(1-\sin\phi)\Delta\theta=(\cos\phi)^{-1}(1-\sin\phi)\Delta s$, where $\Delta s$ is the corresponding arc length taken with plus or minus sign depending on the sign of $\Delta\theta$. In other words we have
$$\Omega'=\sum_{i=1}^N(1-\sin\phi_i)\Delta\theta_i=\sum_{i=1}^N {1-\sin\phi_i \over \cos\phi_i}\Delta s_i\ ,$$
which is a Riemann sum for the line integral of a suitable vector field ${\bf F}$ along $C$. We need a vector field which picks only the parts of $C'$ along parallels, so ${\bf F}$ must be along parallels. A unit vector field in $\R3$ with this property is given by ${1\over r}(-y{\bf i}+x{\bf j})$, where $r=\sqrt {x^2+y^2}$. Taking into account that on the sphere $\cos\phi=r=\sqrt{ x^2+y^2}$, we see that a vector field that does the job is
$$\mbox{\bf F}={1-\sqrt{1-x^2-y^2} \over x^2+y^2}(-y{\bf i}+x{\bf j})\ .$$
Therefore we obtain, applying Stokes' theorem and denoting by $\sigma$ the surface surrounded by $C$:
\begin{equation}\label{Eq4}
\Omega=\int_C \mbox{\bf F}\cdot d\mbox{\bf s}=
\int\hskip-6pt\int_{\sigma} \mbox{curl \bf F}\cdot d\mbox{\bf A}=
\int\hskip-6pt\int_{\sigma} \mbox{curl \bf F}\cdot \mbox{\bf N}\ dA\ .
\end{equation}
A routine calculation shows that
$$ \mbox{curl \bf F}={1\over \sqrt{1-x^2-y^2}}\ {\bf k}$$
and for a point on the sphere with spherical coordinates $(\theta,\phi)$ we can write
$$ \mbox{curl \bf F}\cdot {\bf N}={1\over\sin\phi}{\bf k}\cdot {\bf N}=1\ .$$
Thus Equation (4) reduces to
\begin{equation}\label{Eq5}
\Omega=
\int\hskip-6pt\int_{\sigma} dA\ ,
\end{equation}
i.e., the deficit angle on the sphere is given by the area of the surface surrounded by $C$.\par\noindent
{\bf Remark 1} We chose to present the somewhat clumsy derivation above, since it assumes just familiarity with classical vector calculus. The same result can be obtained using differential forms. Namely, if we introduce the one-form, which in spherical coordinates is given by $\alpha=(1-\sin\phi)d\theta$, it is easy to see that
$$\Omega=\int_C\alpha\ .$$
Applying the generalized Stokes' theorem for forms, we have
$$\Omega=\int_C\alpha=\int\hskip-6pt\int_{\sigma}d\alpha=
\int\hskip-6pt\int_{\sigma}-\cos\phi\, d\phi \wedge d\theta=
\int\hskip-6pt\int_{\sigma}\cos\phi\, d\theta \wedge d\phi=
\int\hskip-6pt\int_{\sigma}\cos\phi \,d\theta d\phi=
\int\hskip-6pt\int_{\sigma}dA\ .$$
(The area element on the sphere in spherical coordinates  is, of course, $dA=\cos\phi\,d\theta d\phi$. Strictly speaking, the one-form $\alpha$ may seem not to be defined at the north pole and indeed the spherical coordinates
$(\theta,\phi)$ don't provide a local chart, but notice that the form becomes 0 at this point and thus $\alpha$ is in fact well-defined.)\par\noindent
{\bf Remark 2} The same conclusion remains valid if we allow $C$ to be piecewise-smooth curve, i.e., a curvilinear polygon. It is obvious how to do parallel transport of ${\bf w}$ across a vertex of the polygon - the angle between ${\bf w}$ and $C$ jumps to a new value, the change being equal to minus the angle between the positive directions of $C$ before and after the vertex.\par\noindent
{\bf Remark 3} When calculating the deficit angle $\Omega'$ along the approximating curve $C'$ we did not take into account the jumps of the angle between a parallel vector and the separate pieces of $C'$ that happen at the corners. These jumps are by $\pm\pi/2$. The point is that $C'$ is closed and turns around the north pole once, so there must be an equal number of ``left turns'' and ``right turns''.\par
Having proved the Gauss-Bonnet formula on the sphere for small curves we now extend the result for arbitrary simple closed curves $C$ by a standard technique. If $C$ is not small we break the region surrounded by it into two smaller regions by introducing an auxiliary open curve between two points on $C$. The curve $C$ becomes a concatenation of two ``smaller'' curves $C_1$ and $C_2$ sharing a common boundary but with opposite orientations. The deficit angle, which reverses sign when switching the orientation, will be the sum of the deficit angles along $C_1$ and $C_2$ and at the same time the area surrounded by $C$ is the sum of the areas surrounded by $C_1$ and $C_2$. Thus in a finite number of steps we reduce the general case to the one for small curves.\par
\section{Gaussian curvature. Gauss-Bonnet theorem for arbitrary closed surfaces}\par\smallskip
Let now $S$ be an arbitrary oriented smooth surface embedded in $\R3$. The {\it Gauss map} $G:S\rightarrow S^2$ is defined as follows - for each point on $S$ take the unit normal vector to the surface at this point and identify the latter with the corresponding point on the unit sphere $S^2$. Clearly $G$ is a smooth map which is not one-to-one. A contractible simple closed curve $C\subset S$  will be mapped to a  closed curve $C'\subset S^2$ which can have self-intersections or even degenerate to a point. (See Fig. \ref{torus} for an illustration.) The map $G$ does not preserve in general the orientation of a curve. In fact, as Fig. \ref{torus} suggests, a positively oriented closed curve on the interior half of the torus, where the curvature is negative, is mapped by $G$ to a negatively oriented curve on $S^2$.
\begin{figure}[h]
\centering
\includegraphics[width=80mm]{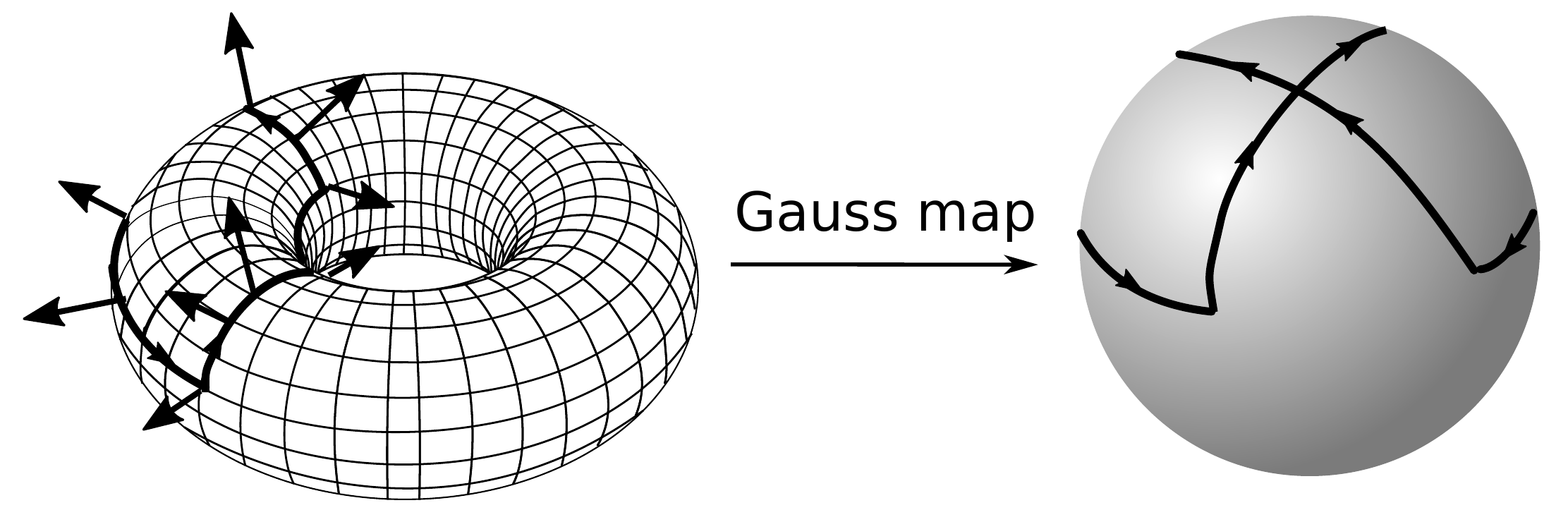}
\caption{The effect of the Gauss map for a curve on the torus\label{torus}}
\end{figure} 
\par
The main result in this section will be a proof of Equation (\ref{Eq2}) with $K$ being the Jacobian of the Gauss map $G$. As a first step we show the following\par\smallskip\noindent
\newtheorem{prop}{Proposition}
\begin{prop}\label{Prop1}
 With the same notations as above, the deficit angle $\Omega$ along $C$ is equal to the deficit angle $\Omega'$ along $C'$.
\end{prop}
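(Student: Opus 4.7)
The key observation is that the Gauss map is built exactly so that, at corresponding points, $S$ and $S^2$ share the same unit normal vector: by definition, the unit normal to $S$ at $p$ is $G(p)$, viewed as a point of $S^2$, and the unit normal to $S^2$ at $G(p)$ is the position vector $G(p)$ itself. Consequently the tangent planes $T_{\mathbf{c}(t)}S$ and $T_{\mathbf{c}'(t)}S^2$ coincide as linear subspaces of $\mathbb{R}^3$ for every $t$, and they inherit the same orientation, since both are oriented by the common normal.

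My plan is to show that a single vector field $\mathbf{w}(t)\subset\mathbb{R}^3$ realizes the parallel transport along $C$ on $S$ and along $C'$ on $S^2$ simultaneously. Let $\mathbf{w}(0)$ be the initial tangent vector at $\mathbf{c}(0)$; by the observation above it is automatically a tangent vector to $S^2$ at $G(\mathbf{c}(0))=\mathbf{c}'(0)$. Define $\mathbf{w}(t)$ to be its parallel transport along $C$ on $S$, i.e.\ the solution of $\bigl({d\mathbf{w}/dt}\bigr)_T=0$ with the projection taken onto $T_{\mathbf{c}(t)}S$. Since $\mathbf{w}(t)\in T_{\mathbf{c}(t)}S=T_{\mathbf{c}'(t)}S^2$ at every instant, the same $\mathbf{w}(t)$ is also a tangent vector field along $C'$ on $S^2$, and the tangential-projection condition defining parallel transport is literally the same condition because the projection is onto the same plane. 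Hence $\mathbf{w}(t)$ is also the parallel transport of $\mathbf{w}(0)$ along $C'$ on the sphere.

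Finally, the deficit angle along $C$ is the signed angle from $\mathbf{w}(0)$ to $\mathbf{w}(1)$ measured in $T_{\mathbf{c}(0)}S$, while the deficit angle along $C'$ is the signed angle from $\mathbf{w}(0)$ to $\mathbf{w}(1)$ measured in $T_{\mathbf{c}'(0)}S^2$. Both endpoints are the same vectors in $\mathbb{R}^3$, both planes coincide, and both carry the same orientation (determined by the common normal $G(\mathbf{c}(0))$), so the two signed angles agree: $\Omega=\Omega'$.

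The only point that requires a little care is that $C'$ may have self-intersections, cusps, or may even degenerate to a point where $G$ fails to be an immersion, so one cannot start by parametrizing $C'$ intrinsically and then transporting. The argument above finesses this by constructing $\mathbf{w}(t)$ on $S$ first and then noticing that \emph{the same} ambient vector field satisfies the sphere-side transport equation, regardless of whether $\mathbf{c}'(t)$ moves, reverses direction, or crosses itself. The remark from the paper that $G$ need not preserve the orientation of $C$ will become relevant only later, when $\Omega'$ is computed as a \emph{signed} area on $S^2$ using Equation~(\ref{Eq5}) — that sign is what will produce the Jacobian of $G$, and hence the Gaussian curvature $K$, in the next step of the program.
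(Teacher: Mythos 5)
Your proof is correct, and it takes a genuinely different and more direct route than the paper. You exploit the fact that the unit normal to $S^2$ at $G(p)$ is the position vector $G(p)=\mathbf{N}(p)$ itself, so the tangent planes $T_{\mathbf{c}(t)}S$ and $T_{\mathbf{c}'(t)}S^2$ are the same linear subspace of $\R3$ for every $t$; consequently the transport condition $\left(d\mathbf{w}/dt\right)_T=0$ is \emph{literally the same ODE} along both curves (explicitly, $d\mathbf{w}/dt=-(\mathbf{w}\cdot d\mathbf{N}/dt)\,\mathbf{N}$, which depends only on the normal field $t\mapsto\mathbf{N}(t)$), and one solution $\mathbf{w}(t)$ serves both surfaces. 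The paper instead builds a developable strip tangent to $S$ along $C$ out of conical and cylindrical pieces, computes the deficit angle of the strip as a Riemann-type sum $\sum(1-\sin\phi_i)\Delta\theta_i$, observes that the identical sum arises for a staircase approximation of $C'$ on the sphere, and passes to the limit. Your argument is shorter, avoids the limiting construction and its implicit convergence claims, and handles degenerate images $C'$ cleanly, as you note; it is essentially a sharpened version of the paper's own earlier remark that parallel transport depends only on the tangent planes along the curve. What the paper's construction buys in exchange is continuity with its pedagogical thread: the strip reuses the tangent-cone picture from the Foucault discussion, makes the deficit angle visible as the opening angle of the flattened strip, and produces the explicit sum that links Proposition 1 back to the sphere computation of Equation (\ref{sphere}). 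Either proof supports the subsequent change-of-variables step equally well.
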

\begin{proof}
 Let ${\bf c}(t)$, $t\in [0,1]$ be a parametrization of $C$. We will approximate the surface $S$ in a neighborhood of $C$ by a Gaussian-flat strip in the following way: Divide $[0,1]$ into $n$ equal subintervals and let $t_i$ be the endpoint of the {\it i}\,th interval. Denote by ${\bf N}_i$ the unit normal vector at the point ${\bf c}(t_i)$. Using the spherical angles $\theta$ and  $\phi$ we have ${\bf N}_i=\cos \theta_i \cos\phi_i\  {\bf i}+\sin \theta_i \cos\phi_i\  {\bf j}+\sin \phi_i\  {\bf k}$. (Note that we use the less common definition for $\phi$ as the angle between  the horizontal plane and the vector. The so-called polar angle, i.e. the angle between the $z$-axis and the vector, is $\pi - \phi$.) At each point ${\bf c}(t_i)$ take (a rectangular piece of) the tangent plane $P_i$. 
\begin{figure}[h]
\centering
\includegraphics[width=90mm]{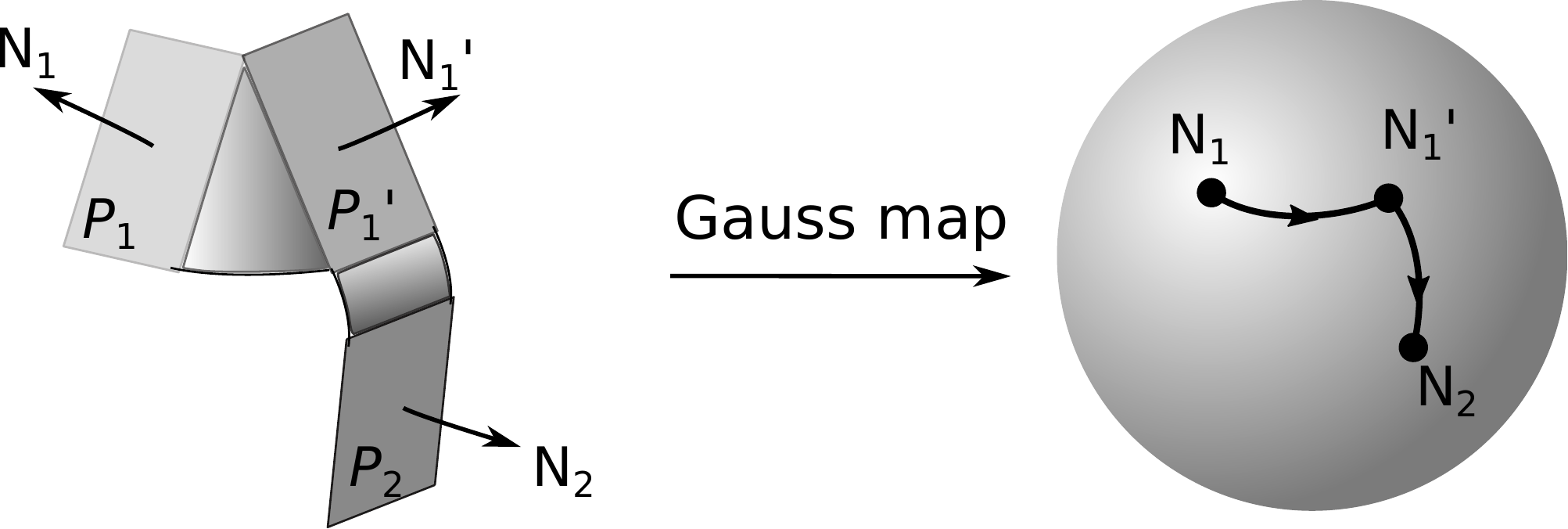}
\caption{Approximating the surface $S$ in a neighborhood of $C$ by a Gaussian-flat strip \label{GM2}}
\end{figure}
Consider now two consecutive planes $P_i$ and $P_{i+1}$. If $\phi_i=\phi_{i+1}$ we connect them by a conical surface defined in the obvious way. In the generic case when $\phi_i\ne\phi_{i+1}$ we take an auxiliary plane $P_i'$, perpendicular to the auxiliary vector ${\bf N}_i'=\cos \theta_{i+1} \cos\phi_i\  {\bf i}+\sin \theta_{i+1} \cos\phi_i\  {\bf j}+\sin \phi_i\  {\bf k}$ and then connect the latter to $P_{i+1}$ using a cylindrical surface  (see Fig. \ref{GM2}). (The auxiliary plane $P_i'$ is not necessarily tangent to $S$.) We close the strip by connecting $P_n$ to $P_1$. In this way we obtain a differentiable surface - the strip $S'$, which will be developable and will be tangent to $S$ at the points ${\bf c}(t_i)$. The smooth closed curve $C\subset S$ can be approximated by a closed piecewise-smooth curve $C_n\subset S'$, e.g., by taking a constant-$\phi$ curve from ${\bf c}(t_i)$ to the auxiliary point ${\bf c}'(t_i)$ and then a constant-$\theta$ curve from ${\bf c}'(t_i)$ to ${\bf c}(t_{i+1})$. When we perform a parallel transport of a vector ${\bf w}$ along  $C_n\subset S'$, the deficit angle $\Omega_n$, i.e., the angle between ${\bf w}(0)$ and ${\bf w}(1)$, depends only on the strip itself. Namely, if we cut the strip along a line and lay it flat, $\Omega_n$ is the angle between the final and the beginning edges of the cut (Fig. \ref{GM3}). The situation is exactly the same as when considering the cone tangent to the sphere along a geographic parallel at angle $\phi$. We have to sum up the contributions to the deficit angle of all the conical pieces. Thus we have
$$\Omega_n=\sum_{i=1}^n(1-\sin\phi_i)\Delta\theta_i\ \ ,$$
where $\Delta\theta_i=\theta_{i+1}-\theta_i$. But the same deficit angle will be obtained for the piecewise-smooth curve $C_n'\subset S^2$ which is obtained by connecting each point ${\bf N}_i$ on the sphere to the next point ${\bf N}_{i+1}$ by first moving along the parallel (constant $\phi$), then along the meridian (constant $\theta$). Taking the limit $n\rightarrow\infty$, the deficit angle $\Omega_n$ will approach $\Omega$ along $C\subset S$ and at the same time will approach $\Omega'$ along $C'\subset S^2$. This completes the proof.
\end{proof}
\begin{figure}[h]
\centering
\includegraphics[width=80mm]{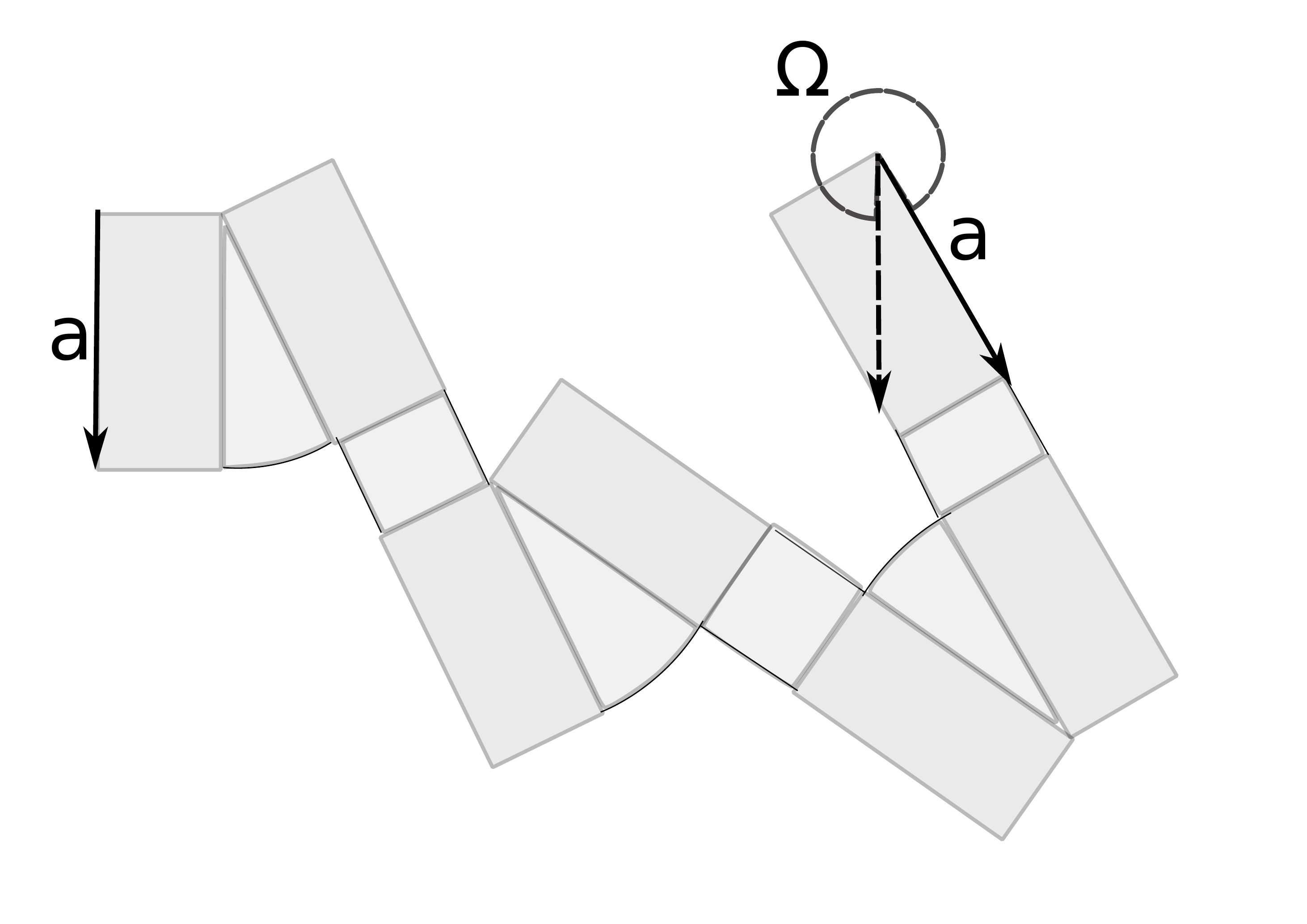}
\caption{A Gaussian-flat approximating strip, cut and laid flat \label{GM3}}
\end{figure}
If the Gauss map $G$ was one-to-one, Equation (\ref{Eq2}) would follow immediately from Proposition \ref{Prop1} and Equation (\ref{Eq5}) by a simple change of variables. Namely, if you take a simple closed contractible curve $C\subset S$ and $\sigma\subset S$ is the surface surrounded by it, $G$ will map $\sigma$ to $\sigma'\subset S^2$ and $C$ to $C'\subset S^2$, which will be the boundary of $\sigma'$. Then, by Proposition 1, the deficit angle $\Omega$ along $C$ is equal to the deficit angle $\Omega'$ along $C'$, which by Equation (\ref{Eq5}) is the surface integral over $\sigma'$ of the function 1 and this would be equal to the integral over $\sigma$ of the Jacobian of $G$. Thus if we set $K$, the {\it Gaussian curvature}, to be the Jacobian of the Gauss map (in some local coordinates on $S$ and $S^2$) we obtain exactly Equation (\ref{Eq2}). Note that  $G$ reverses the orientation if and only if $K$ is negative and the correct application of the change of variables requires that we take $K$ and not $|K|$ as the integrand. \par
The Inverse Function Theorem applied to the map $G$ says that it is one-to-one in a (small enough) neighborhood of any point for which $K\ne 0$. The surface $\sigma$ surrounded by $C$ (it is topologically like a closed disk) will be a union of, possibly countably many, open (in the relative topology on $\sigma$) simply-connected sets where $K>0$, open simply-connected sets where $K<0$, and connected closed sets where $K=0$. The boundaries of these open sets will be (piecewise smooth) curves which we can orient positively. The deficit angle $\Omega$ along an oriented curve $C$ has the properties of a line integral -- if you split the curve into pieces the total angle is the sum of the contributions along the pieces and if you reverse the orientation of $C$ the deficit angle changes its sign. 
Thus by a standard technique, we replace the deficit angle calculated along $C$ by a sum of angles calculated along boundaries of interior regions . The contribution of curves in the interior of $\sigma$ will cancel since each such curve participates twice with opposite orientation. We see that it is enough to consider curves $C$ which surround regions where $K>0$ everywhere except possibly on the boundary, or $K<0$ everywhere except possibly on the boundary, or regions (if there are such) where $K=0$. Notice that this can be performed one step at a time (see Fig. \ref{Reg}  for an illustration) -- we replace the original curve $C$ by two closed curves $C_1$ and  $C_2$, where $C_1$ surrounds a single region as above and $C_2$ surrounds all the rest. The deficit angle will be given by a possibly  infinite convergent sum of contributions for which Equation (2) holds. 
\begin{figure}[h]
\centering
\includegraphics[width=60mm]{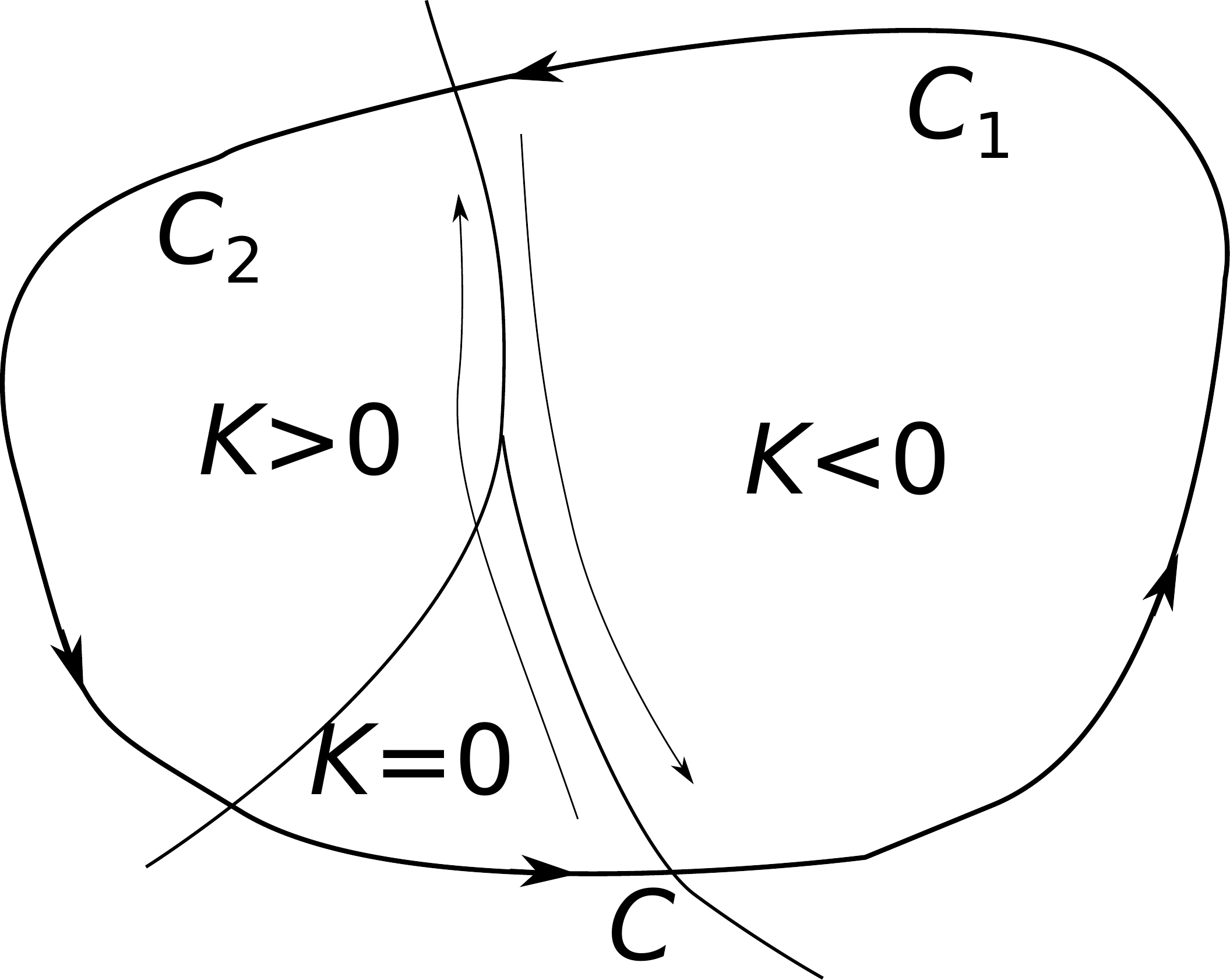}
\caption{Breaking the region surrounded by $C$ into regions of definite sign of the curvature \label{Reg}}
\end{figure}\par
Before we proceed, it is helpful to derive a convenient (and quite familiar) explicit expression for $K$ as the Jacobian of $G$. Recall that the Jacobian of a differentiable map from a (two-dimensional) manifold to another manifold of the same dimension can be viewed as the determinant of the matrix, corresponding to the differential of that map. We can write symbolically
$$K={\rm det}(dG)$$
 The differential is a linear map from the tangent space, at some point, of the first manifold, to the tangent space at the image point, of the second manifold. Namely, for a tangent vector ${\bf v}$ at some point ${\bf x}_0\in S$ we choose a smooth path ${\bf c}(t)\in S$ with ${\bf c}(0)={\bf x}_0$ and ${d{\bf c}\over dt}(0)={\bf v}$. Then the differential of $G$ at ${\bf x}_0$ is defined by
$$dG_{{\bf x}_0}({\bf v}):={d\over dt}\,(G\circ {\bf c})(0)\ \ $$
and the image is a tangent vector to $S^2$ at the point $G({\bf x}_0)$.
The Gauss map $G$ is specific in that the two tangent spaces literally coincide - the vectors, tangent to $S$ at some point ${\bf x}_0$ are orthogonal (as vectors in $\R3$) to the normal vector ${\bf N}({\bf x}_0)$ and therefore they are tangent to the sphere $S^2$ at the point ${\bf N}({\bf x}_0)$. Thus we get a linear map (operator) from $\bBR^2$ into itself:
$$W_{{\bf x}_0}\,{\bf v}:=-dG_{{\bf x}_0}({\bf v})\ \ ,$$
called the {\it shape operator} or {\it Weingarten map}. (The minus sign is a question of convention.)
The corresponding bilinear form on the tangent space at ${\bf x}_0$ , defined by the formula
$$({\bf v},{\bf w})_{_{\rm II}}:=(W_{{\bf x}_0}\,{\bf v})\cdot {\bf w}$$
is known as the {\it second fundamental form} of the surface. This form is symmetric (or equivalently, the shape operator is self-adjoint). It is enough to check symmetry for vectors forming a basis.
 Let ${\bf r}:D\subset \bBR^2\rightarrow S$ be a local parametrization of $S$ and let $(u,v)$ be the local coordinates.  The two tangent vectors ${\bf r}_u:={\partial {\bf r}\over\partial u}$ and ${\bf r}_v:={\partial {\bf r}\over\partial v}$ are linearly independent and therefore give a (not necessarily orthonormal) basis for the tangent space at any point $(u_0,v_0)$. The unit normal vector at this point is (suppressing further in the notations the dependence on the point)
$${\bf N}={{\bf r}_u\times {\bf r}_v\over \|{\bf r}_u\times {\bf r}_v\|}$$ 
and, using the definition of the differential and properties of the triple product of vectors, we calculate
$$({\bf r}_u,{\bf r}_v)_{_{\rm II}}=(W\,{\bf r}_u)\cdot{\bf r}_v=-(dG({\bf r}_u))\cdot{\bf r}_v=-{\partial{\bf N}\over\partial u}\cdot{\bf r}_v={\bf r}_{uv}\cdot{\bf N}=({\bf r}_v,{\bf r}_u)_{_{\rm II}}\ \ ,$$
where ${\bf r}_{uv}:={\partial{\bf r}_u\over\partial v}={\partial^2 {\bf r}\over\partial u\partial v}={\bf r}_{vu}$. Analogously we have
 $({\bf r}_u,{\bf r}_u)_{_{\rm II}}={\bf r}_{uu}\cdot{\bf N}$ and $({\bf r}_v,{\bf r}_v)_{_{\rm II}}={\bf r}_{vv}\cdot{\bf N}$. We obtain a symmetric $2\times 2$ matrix, expressing the second fundamental form in the basis $\{{\bf r}_u,{\bf r}_v\}$. If the basis is orthonormal the matrix will coincide with the matrix corresponding to the operator $W$ and the Gaussian curvature $K$ will be given by its determinant.The two eigenvalues $k_1$ and $k_2$ are called the {\it principal curvatures} at the point. They give the curvatures of the curves on the surface along the two eigenvectors and measure the rate of rotation of a unit normal vector along either of these curves.\par
In general, let $\{ {\bf e}_1,{\bf e}_2\}$ be an orthonormal basis in the tangent space and let $A$ be the matrix with entries
$a_{ij}=({\bf e}_i,{\bf e}_j)_{_{\rm II}}$. Let ${\bf r}_1={\bf r}_u$ and ${\bf r}_2={\bf r}_v$ and $B$ be the matrix with entries $b_{ij}=({\bf r}_i,{\bf r}_j)_{_{\rm II}}$. We have $b_{11}={\bf r}_{uu}\cdot{\bf N}$, \ 
$b_{12}=b_{21}={\bf r}_{uv}\cdot{\bf N}$ and $b_{22}={\bf r}_{vv}\cdot{\bf N}$.
 Writing ${\bf r}_i=\sum_i r_{ij}{\bf e}_j$, the components $r_{ij}$ form a matrix $R$. Using bilinearity of the second fundamental form, we obtain
$$B=RAR^{\rm t}\ ,\ \ \ \  {\rm det}B=({\rm det}R)^2\,{\rm det}A\ ,\ \ \ \ g_{ij}:={\bf r}_i\cdot{\bf r}_j=(RR^{\rm t})_{ij}\ . $$
The symmetric matrix $g$ with entries $g_{ij}$ is the Riemannean metric, in the local coordinates $(u,v)$, induced by the Euclidean metric in $\R3$. We conclude that
$$K={\rm det}A={{\rm det}B\over({\rm det}R)^2}={{\rm det}B\over{\rm det}g}
={b_{11}b_{22}-(b_{12})^2\over g_{11}g_{22}-(g_{12})^2}\ .$$
\par\smallskip\noindent
\begin{prop}\label{Prop2}
The image under the Gauss map $G$ of any closed subset of the surface $\sigma$ where $K=0$  is a closed set in $S^2$ which has no interior. 
\end{prop}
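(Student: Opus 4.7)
The plan is to prove two separate things about $G(E)$: that it is closed in $S^2$, and that it has empty interior. Closedness is immediate --- $\sigma$ is topologically a closed disk, hence compact; $E$ is a closed subset of a compact set, so it is itself compact; and the continuous image $G(E)$ in the Hausdorff space $S^2$ is therefore compact, hence closed.

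For empty interior, the key input is the formula for $K$ derived in the paragraph just before the proposition: in local coordinates, $K$ equals (up to a positive factor coming from the Riemannian metric) the Jacobian determinant of $G$. So the hypothesis $K \equiv 0$ on $E$ is exactly the statement that every point of $E$ is a critical point of the smooth map $G$ between two-dimensional manifolds. The plan is then to show that the image of such a critical set has Lebesgue area zero in $S^2$; a set of measure zero in $S^2$ automatically has empty interior.

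The argument for measure zero is a quantitative unpacking of the fact that $dG$ has rank at most $1$ on $E$, so infinitesimally $G$ crushes two dimensions into at most one. Covering a compact neighborhood of $E$ in $\sigma$ by finitely many coordinate charts and fixing $\varepsilon > 0$, I would use continuity of $dG$ together with compactness of $E$ to find in each chart a neighborhood of $E$ on which one singular value of $dG$ is smaller than $\varepsilon$ while the other remains bounded by some $M$. Tiling such a neighborhood by little squares of side $\delta$ and bounding the area of the image of each square that meets $E$ by roughly $M\varepsilon\,\delta^2$ gives an overall estimate of the form $\mathrm{area}(G(E)) \le C\varepsilon$, with $C$ independent of $\varepsilon$; sending $\varepsilon \to 0$ kills it.

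The main obstacle is bookkeeping: making the singular-value estimate uniform over all of $E$ and across the finite atlas, which is where compactness of $E$ is truly indispensable. Readers familiar with Sard's theorem will recognize this as the special case of Sard for smooth maps between manifolds of equal dimension, since $E$ consists entirely of critical points of $G$, and may short-circuit the estimate by appealing to it directly.
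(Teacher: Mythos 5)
Your argument is correct in outline, but it takes a genuinely different route from the paper. You prove the stronger statement that $G(E)$ has measure zero by the standard ``easy case'' of Sard's theorem: since $K=\det(dG)/\det g$ vanishes on $E$, every point of $E$ is a critical point of $G$, and a compactness-plus-tiling estimate shows the image of the critical set has arbitrarily small area. The paper instead never leaves geometry: it splits the flat region into flat points ($k_1=k_2=0$) and non-flat points, shows that through each non-flat point runs a unique line along which $\mathbf{N}$ is constant, proves (via Massey's lemma that $1/k$ is linear in arc length) that these lines must run to the boundary of $\sigma$, and concludes that $G$ collapses each such line, and each component of the flat-point set, to a point --- so the image is a union of curves and points. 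What your approach buys is brevity and robustness: it uses nothing about $G$ being a Gauss map and avoids the delicate structure theory of developable surfaces. What the paper's approach buys is exactly that structure theory, which it explicitly wants as a payoff (the three equivalent characterizations of developable surfaces stated right after the proposition); the paper itself remarks that the proposition is a special case of Sard's theorem, so your short-circuit is anticipated. One detail to tighten in your sketch: knowing pointwise that the singular values of $dG$ on a small square $Q$ are bounded by $M$ and $\varepsilon$ bounds $\int_Q|\det dG|$, but not by itself the area of the set $G(Q)$; to trap $G(Q)$ in a rectangle of dimensions roughly $M\delta$ by $(\varepsilon+\omega(\delta))\delta$ you should invoke the uniform first-order Taylor expansion of $G$ about a point of $Q\cap E$ (where $dG$ has rank at most one), with $\omega$ the modulus of continuity of $dG$ on the compact neighborhood; summing over the $O(\delta^{-2})$ squares and letting $\delta\to 0$, $\varepsilon\to 0$ then completes the estimate.
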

\begin{proof} It is enough to consider a finite simply-connected surface $\sigma$ with boundary, having everywhere Gaussian curvature $K=0$. The points of $\sigma$ where both principal curvatures are zero are called {\it flat points}. These are precisely the points where the so-called {\it mean curvature} $K_m:=k_1+k_2$ becomes zero. They form a closed subset of $\sigma$, which we denote by $U$. Its complement $U^c$ is an open subset of $\sigma$, consisting of the non-flat points. Through each point of $U^c$ passes a unique line, which must extend to the boundary of $\sigma$ in each direction, as we shall show (see. e.g., \cite{Massey}). Indeed, for each non-flat point we must have either $k_1=0,\ k_2\ne 0$ or $k_2=0,\ k_1\ne 0$ and we can put on $U^c$ a smooth vector field given by the eigenvectors in the "flat" direction. The integral curve $C$ of this vector field is the desired line. Notice that from the definition of the Gauss map this means that the normal vector ${\bf N}$ is constant along $C$  and therefore the tangent planes at all its points coincide, i.e., there is a common tangent plane touching $\sigma$ along the whole curve $C$. For example a plane can touch the torus along two circles - one on top, the other at the bottom. The difference between this and our situation is that the curvature of the torus is zero only along these two circles. To see that in our case $C$ is actually a line, choose a nearby integral curve $C'$. The tangent planes at $C$ and $C'$ are definitely different if we choose $C'$ to be "close enough" but different from $C$. These two planes intersect along a line. In the limit, when $C'$ approaches $C$, this line will approach both $C$ and $C'$. There exists on $U^c$ a second vector field, orthogonal to the one above - at each point of $U^c$ choose (continuously) a unit tangent vector given by the second eigenvector (in the non-flat direction) of the shape operator $W$. The integral curves of this second vector field are orthogonal to the lines constructed earlier. For an arbitrary point ${\bf x}_0\in U^c$ construct an open "trapeze" around it  by first taking the path along the non-flat direction ${\bf l}(t),\ t\in (-\epsilon, \epsilon)$ with ${\bf l}(0)={\bf x}_0$, ($t$ being the arc length), then for each  ${\bf l}(t)$ taking the unique (parametrized by arc length $s$) line ${\bf l}(s,t), s\in (-\delta,\delta)$ with ${\bf l}(0,t)={\bf l}(t)$. By varying $t$ we obtain a family of lines in $U^c$ which have the property that  each of them has a common tangent plane. This implies that for any fixed $t$ the velocity vectors ${\bf v}(s,t):={d{\bf l}\over dt}(s,t)$ are parallel for all $s$. Further, these velocities must be a linear function of $s$ or otherwise they would not produce a family of lines (See Figure. \ref{Con}). (The coefficients of this linear function depend in general on $t$.) As a consequence the arc length of the curve traced by ${\bf l}(s,t), t\in [t_1,t_2]$, $s$ fixed, is a linear function of $s$. Indeed, we can write ${\bf v}(s,t)={\bf v}_0(t)(\alpha(t)+s\beta(t))$ and assume that $\alpha(t)+s\beta(t)\ge 0$. 
\begin{figure}[h]
\centering
\includegraphics[width=60mm]{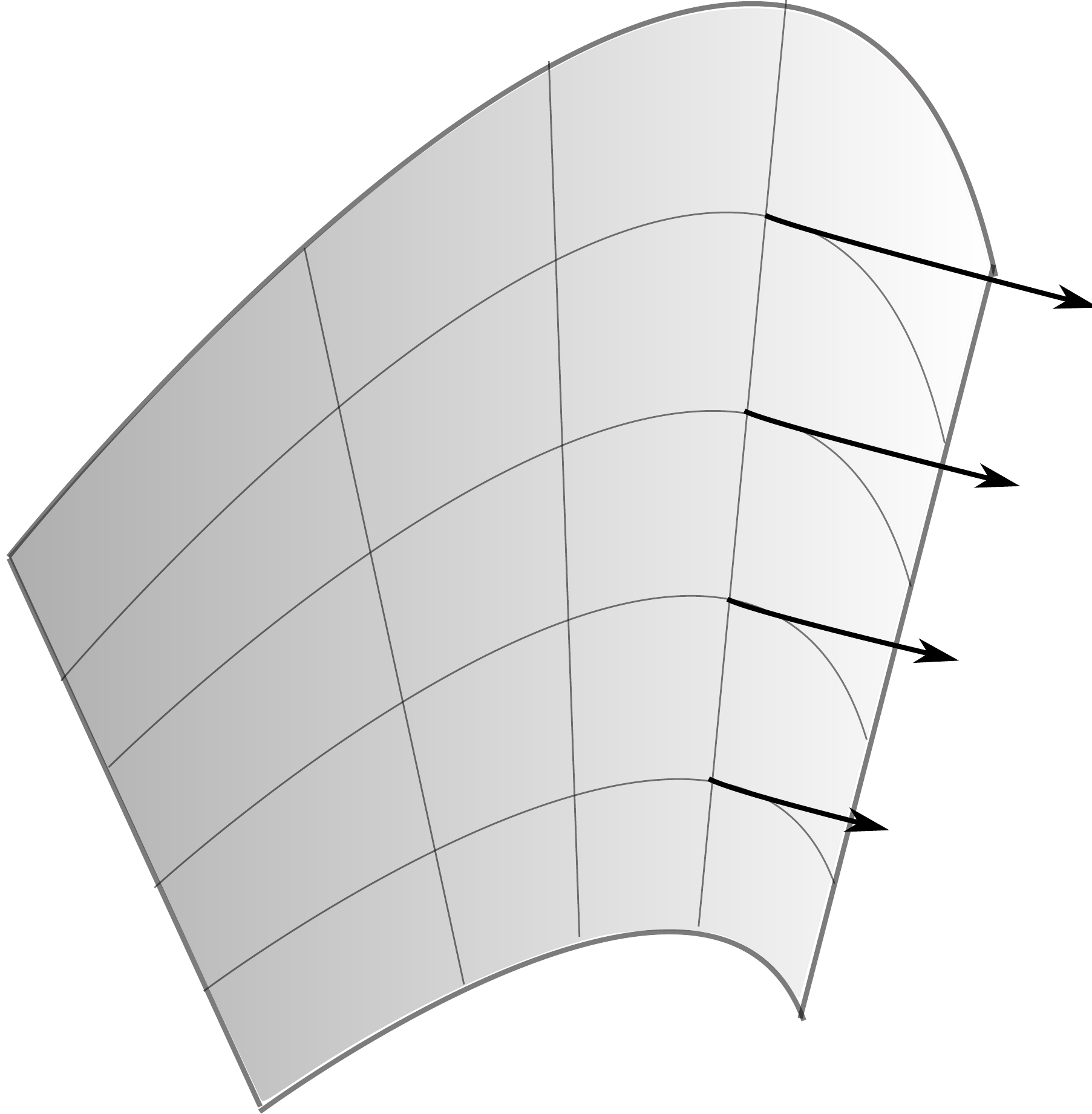}
\caption{A conical surface is a generic surface with $K=0$ \label{Con}}
\end{figure}
Then
$$L_s(t_1,t_2)=\int_{t_1}^{t_2} \|{\bf v}(s,t)\|\,dt=\int_{t_1}^{t_2}\alpha(t)\|{\bf v}_0(t)\|\,dt+s\int_{t_1}^{t_2}\beta(t)\|{\bf v}_0(t)\|\,dt\ .$$
Let $I$ be an open interval on which $\beta(t)\ne 0$. Then for any two $t_1,t_2\in I$ the arc length $L_s(t_1,t_2)$ is a non-constant  linear function of $s$ and therefore the extension of the two lines ${\bf l}(s,t_1)$ and ${\bf l}(s,t_2)$ must intersect for some $s$. In fact if we take three such lines (the extensions of) any two of them must intersect which is only possible if all of them intersect at a common point and we have a conical surface. If we have a closed interval $J$ on which $\beta(t)= 0$ then for any $t_3,t_4\in J$ the corresponding lines ${\bf l}(s,t_3)$ and ${\bf l}(s,t_4)$ will be parallel and we get a cylindrical surface.
 Now we can show an important classical result (\cite {Massey}).
\newtheorem{Lem}{Lemma}
\begin{Lem}
The reciprocal of the nonzero principal curvature is a linear function of the arc length along any line in $U^c$.
\end{Lem}
\begin{proof} Let $C$ be a line in $U^c$ and ${\bf x}_0\in C$ an arbitrary point. Construct an open "trapeze" as above parametrized by ${\bf l}(s,t)$, so that ${\bf l}(s,0)$ traces $C$ and ${\bf l}(0,0)={\bf x}_0$. Note that the parameter $s$ is arc length but the parameter $t$ is arc length only for $s=0$. Thus, according to the property shown earlier about linearity of the arc length with respect to $s$, if $(s,\tau)$ is another parametrization with $\tau$ - arc length along the non-flat directions, we have $\tau(0,t)=t$, $\tau(s,0)=0$ and therefore we must have $\tau(s,t)=t(1+s \alpha(t))$ for some coefficient function $\alpha(t)$. If $k(s,0)$ denotes the nonzero principal curvature at the point ${\bf l}(s,0)$ we can write
$$k(s,0)=\lim _{\tau\rightarrow 0}{\|{\bf N}(s,\tau)-{\bf N}(s,0)\|\over \tau}=
\lim _{t\rightarrow 0}{\|{\bf N}(0,t)-{\bf N}(0,0)\|\over t(1+s\alpha(t))}=k(0,0){1\over 1+s\alpha(0)}\ .$$
In the derivation above we used the fact that the normal vectors ${\bf N}$ are constant along any line in $U^c$ , i.e for points satisfying $t={\rm const}$.
\end{proof}
Returning to the proof of Proposition \ref{Prop2}, we see that any  line $C$ in $U^c$  cannot have as endpoint a point in $U$ since at the latter both principal curvatures must become zero but the nonzero principal curvature along $C$ approaches zero at infinity. Thus any line $C$ in $U^c$ must extend in both directions to the boundary of $\sigma$. Under the Gauss map the whole line collapses to a point and a connected component of $U^c$ will collapse to a connected curve. Since $U$ consists of the closed set of flat points in $\sigma$, it follows that any connected component of $U$ will be mapped by the Gauss map to a single point. This completes the proof. (Proposition 2 is a special case of Sard's theorem.)
\end{proof}
We have seen that a {\it developable surface} in $\R3$ can be defined in one of several equivalent ways:\\
1. A surface with $K=0$\\
2. A surface consisting of a family of lines in such a way that a tangent plane to the surface is tangent along a whole line\\
3. A surface obtained by moving a line in $\R3$ in such a way that at each moment the velocity of each point on the line is a linear function of the arc-length along this line.\par
Putting together the results of Propositions \ref{Prop1} and \ref{Prop2} we arrive at the result stated in Equation (\ref{Eq2}). Indeed, if we have a simple closed curve $C\subset S$ surrounding a region where $K>0$ (except possibly on the boundary) or a region where $K<0$ (except possibly on the boundary) , we apply directly Proposition \ref{Prop1} and the change of variables formula with the Jacobian being equal to   $K$. Note that when 
$K<0$ the Gauss map reverses orientation and so if $C$ is positively oriented its image $C'$ under $G$ will be negatively oriented and the area surrounded $C'$ should be treated as negative. In the case when $C$ surrounds a region with $K=0$ the formula still holds as the region surrounded by $C$ is mapped by the Gauss map to a closed set with no interior. Therefore we have shown
\begin{prop}\label{Prop3}
The deficit angle $\Omega$ along a simple positively oriented closed curve $C\subset S$ is equal to the integral over the surface surrounded by $C$ of the Gaussian curvature $K$.
\end{prop}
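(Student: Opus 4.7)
My plan is to assemble the proof from three ingredients already in hand: Proposition \ref{Prop1} transports the deficit angle from $S$ to the sphere via the Gauss map, Equation (\ref{Eq5}) identifies the deficit angle on $S^2$ with the enclosed area, and Proposition \ref{Prop2} shows that flat regions contribute nothing on the sphere side. The organizing idea, already foreshadowed in the discussion preceding the statement, is to partition the region $\sigma$ bounded by $C$ into subregions on which $K$ has a single sign, then treat each type separately.

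First I would carry out the decomposition. The open set $\{K\ne 0\}\subset\sigma$ splits into connected components on each of which $K$ is either strictly positive or strictly negative, with closed complement $\{K=0\}$. Iterating the splitting procedure in Figure \ref{Reg}, I replace $C$ by at most countably many oriented boundary curves, each enclosing a region of one of three types. Shared interior edges are traversed twice with opposite orientations and cancel, so by additivity of the deficit angle it suffices to verify Equation (\ref{Eq2}) on each piece.

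For a piece $\sigma_+$ with $K>0$, the Inverse Function Theorem makes $G$ a local diffeomorphism, and after further refining into patches small enough that $G$ is injective and orientation-preserving (since $\mathrm{det}(dG)=K>0$) I apply Proposition \ref{Prop1} to get $\Omega(C_+)=\Omega'(G(C_+))$, then Equation (\ref{Eq5}) to identify this with the area of $G(\sigma_+)\subset S^2$, and finally the change of variables formula
$$\mathrm{Area}(G(\sigma_+))=\int\hskip-6pt\int_{\sigma_+}|\mathrm{det}(dG)|\,dA=\int\hskip-6pt\int_{\sigma_+}K\,dA\ .$$
For a piece with $K<0$ the same argument applies, except that $G$ reverses orientation, so the image curve is negatively oriented and $\Omega'$ equals minus the unsigned area of the image; this is exactly compensated by the sign of $\mathrm{det}(dG)=K<0$ in the change of variables, and both sides again match. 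For a piece $\sigma_0$ with $K\equiv 0$, Proposition \ref{Prop2} says $G(\sigma_0)$ has empty interior in $S^2$, so $\Omega(C_0)=\Omega'(G(C_0))=\mathrm{Area}(G(\sigma_0))=0$, while $\int\hskip-6pt\int_{\sigma_0}K\,dA=0$ trivially.

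The main obstacle I expect is the passage from local to global in the $K\ne 0$ case: the Inverse Function Theorem yields only local injectivity, so a single component of $\{K>0\}$ could in principle wrap multiply onto $S^2$. The remedy is the finer subdivision into patches on which $G$ is a bijection, which produces a possibly countable family; convergence of the resulting sum is guaranteed by the bound $\int\hskip-6pt\int_\sigma |K|\,dA\le 4\pi$, which follows from the total area of $S^2$ being $4\pi$. Boundary sets where pieces meet $\{K=0\}$ have measure zero and affect neither side of the identity.
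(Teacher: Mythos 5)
Your proposal follows essentially the same route as the paper: decompose $\sigma$ into regions of definite sign of $K$ plus the closed set where $K=0$, use additivity and cancellation of the deficit angle along shared interior boundaries, apply Proposition \ref{Prop1} together with Equation (\ref{Eq5}) and the change of variables formula on the sign-definite pieces (with the orientation reversal for $K<0$ absorbed by taking the Jacobian $K$ rather than $|K|$), and invoke Proposition \ref{Prop2} to dispose of the flat pieces. Your extra attention to the local-versus-global injectivity of $G$ is a welcome refinement that the paper glosses over. One claim in your final paragraph is false, however: $\int\hskip-6pt\int_\sigma |K|\,dA$ is \emph{not} bounded by $4\pi$, because the Gauss map can cover portions of $S^2$ with high multiplicity; for instance, for a torus with a small disk removed this integral is close to $8\pi$. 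Convergence of your countable sum should instead be justified by the finiteness of $\int\hskip-6pt\int_\sigma |K|\,dA$, which holds simply because $K$ is continuous on the compact closure of $\sigma$.
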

The proof of the theorem of Gauss--Bonnet is now fairly simple. Choose a triangulation of the closed oriented surface $S$ with $F$ faces (triangles), $E$ edges and $V$ vertices. Let  $\Omega_i$ be the deficit angle along the $i$th triangle $\sigma_i$. We have 
\begin{equation}\label{Eq6}
\int\hskip-6pt\int_{S} K\,dA=\sum_{i=1}^F \int\hskip-6pt\int_{\sigma_i} K\,dA=\sum_{i=1}^F \Omega_i\ .
\end{equation}
The deficit angle $\Omega_i$ is the difference between $2\pi$ and the total angle or rotation of the velocity vector ${\bf v}$ relative to the vector ${\bf w}$ which was transported parallel to itself along the (positively oriented) boundary of $\sigma_i$ . Therefore we can write
\begin{equation}\label{Eq7}
\Omega_i=2\pi - \sum_{k=1}^3 \alpha_{ik} - \sum_{k=1}^3 (\pi - \gamma_{ik})=
 \sum_{k=1}^3  \gamma_{ik}-\pi  - \sum_{k=1}^3 \alpha_{ik}\ .
\end{equation}
In the last formula $\alpha_{ik}$ denotes the angle of rotation of ${\bf v}$ relative to ${\bf w}$ along the $k$th edge of the triangle $\sigma_i$. Note that the sign of $\alpha_{ik}$ depends on the orientation of the edge. The angle $(\pi - \gamma_{ik})$ is the angle of rotation of ${\bf v}$ when it moves across the $k$th vertex of $\sigma_i$. If the triangle $\sigma_i$ is a geodesic triangle, i.e. its edges are geodesic curves and thus the quantities $\alpha_{ik}$ are all zero, we obtain from the last two equations yet another well-known characterization of the Gaussian curvature:
\begin{equation}\label{Eq8}
\int\hskip-6pt\int_{\sigma_i} K\,dA= \sum_{k=1}^3  \gamma_{ik}-\pi\ ,
\end{equation}
i.e., the integral of the Gaussian curvature over a geodesic triangle measures the deviation of the sum of the angles of the triangle from $\pi$.\par
From Equations (\ref{Eq6}) and (\ref{Eq7}) we infer
\begin{equation}
\int\hskip-6pt\int_{S} K\,dA= \sum_{i=1}^F\left( \sum_{k=1}^3  \gamma_{ik}-\pi  - \sum_{k=1}^3 \alpha_{ik}
\right)=\sum_{i=1}^F\sum_{k=1}^3  \gamma_{ik}-\pi F - \sum_{i=1}^F\sum_{k=1}^3 \alpha_{ik}\ .
\end{equation}
The first double sum in the right-hand side is the sum of all angles of all triangles and it is obviously equal to $2\pi V$, since on a smooth surface the sum of all angles at a given vertex is $2\pi$. The last double sum is zero because it contains  angles of rotation of ${\bf v}$ relative to ${\bf w}$ along edges and each edge enters twice in the sum with opposite orientations. 
The same argument as the one used in Equation (\ref{Eq3}) gives the final result:
\begin{equation}
\int\hskip-6pt\int_{S} K\,dA=2\pi(V-{F\over 2})=2\pi (V-E+F)=2\pi\chi(S)\ .
\end{equation}
Thus we have proven
\newtheorem*{theo}{Gauss--Bonnet Theorem}
\begin{theo}
The integral of the Gaussian curvature over a closed orientable surface is equal to the Euler characteristic of this surface multiplied by $2\pi$.
\end{theo}

\end{document}